\newtheorem{theorem}{Theorem}[section]
\newtheorem{lemma}[theorem]{Lemma}
\newtheorem{remark}[theorem]{Remark}
\def\P{\mathbb{P}}
\def\e{\mathbf{E}}
\newcommand{\ter}{\hspace{\stretch{3}}$\square$\\[1.8ex]}
\def\l{\langle}
\def\r{\rangle}
\title{\bf An application of the backbone decomposition to supercritical super-Brownian motion with a barrier}
\author{{\large A. Kyprianou\footnote{{\sc Department of Mathematical Sciences, University of Bath, Claverton Down, Bath, BA2 7AY, UK.} 
}
\ \  A. Murillo-Salas\footnote{{\sc  Departamento de Matem\'aticas, Universidad de Guanajuato,
Jalisco s/n, Mineral de Valenciana,
Guanajuato, Gto. C.P. 36240, M\'exico.} 
}} \ \ {\large and} \  J. L.
P\'erez$^{*,}$\footnote{{\sc Department of Statistics, ITAM, Rio Hondo 1,
Tizapan 1 San Angel, 01000 M\'exico, D.F.}
}
}
\begin{document}

\maketitle
\begin{abstract}\noindent We analyse the behaviour of supercritical super-Brownian motion with a barrier through the pathwise backbone embedding of Berestycki et al. \cite{BKMS}. In particular, by considering existing results for branching Brownian motion due to Harris et al. \cite{HHK} and Maillard \cite{Mal}, we obtain, with relative ease, conclusions regarding the growth in the right most point in the support, analytical properties of the associated one-sided FKPP {\color{black}wave} equation as well as the distribution of mass on the exit measure associated with the barrier.
\bigskip

\noindent {\sc Key words and phrases}: Super-Brownian motion,
backbone decomposition, killed super-Brownian
motion.

\bigskip

\noindent MSC 2010 subject classifications: 60J68, 35C07.
\end{abstract}

\vspace{0.5cm}

\section{Introduction}

Suppose that $X=\{X_t: t\geq 0\}$ is a (one-dimensional) super-diffusion   with motion corresponding to that of a Brownian motion with drift $-\rho\in\mathbb{R}$, {\color{black} stopped} at zero, and branching mechanism $\psi$ taking the form
\begin{equation}
\psi(\lambda) = -\alpha \lambda + \beta\lambda^2 + \int_{(0,\infty
)} (e^{-\lambda x} - 1 + \lambda x )\Pi({\rm d}x),
\label{branch-mech}
\end{equation}
for $\lambda \geq 0$ where $\alpha = - \psi'(0^+)\in(0,\infty)$,
$\beta\geq 0$ and $\Pi$ is a measure concentrated on $(0,\infty)$
which satisfies
$\int_{(0,\infty)}(x\wedge x^2)\Pi({\rm d}x)<\infty$. {\color{black} We also insist that $\beta>0$ if $\Pi\equiv 0$.} 
The existence of this class of superprocesses is guaranteed by \cite{D, Dyn1993, Dyn2002}.

Let
$\mathcal{M}_F(I)$ be  the space of finite measures on $I\subseteq \mathbb{R}$
and note that $X$ is a $\mathcal{M}_F[0,\infty)$-valued Markov
process under $\mathbb{P}_\mu$ for each $\mu\in\mathcal{M}_F[0,\infty)$, where $\mathbb{P}_\mu$ is law of $X$ with initial configuration $\mu$. One may think of $\mathbb{P}_\mu$ as a law on cadlag path-space $\mathcal{X} := D([0,\infty)\times \mathcal{M}_F[0,\infty))$.  Henceforth we shall use standard inner product notation, for
$f\in C_b^+[0,\infty)$ and $\mu\in\mathcal{M}_F[0,\infty)$,
\[
\langle f , \mu\rangle =  \int_{\mathbb{R} }f(x)\mu({\rm d}x).
\]
Accordingly we shall write $||\mu|| = \langle 1,\mu \rangle$.



Recall that the total mass of the process $X$ is a continuous-state branching process with branching mechanism $\psi$. Since there is no interaction between spatial motion and branching
we can characterise our  $\psi$-superdiffusion as supercritical on account of the assumption  $\alpha = -\psi'(0^+)>0$.  Such processes may exhibit explosive behaviour, however, under the conditions assumed above, $X$ remains finite at all positive times.
We insist moreover that $\psi(\infty)=\infty$ which means that with positive probability the event $\lim_{t\uparrow\infty}||X_t||=0$ will occur. Equivalently this means that the total mass process does not have monotone increasing paths; see for example the summary in Chapter 10 of Kyprianou \cite{K}. 
The probability of the
event
\[
\mathcal{E}:= \{\lim_{t\uparrow \infty} ||X_t|| =0\}
\]
 is described in terms of the largest root, say $\lambda^*$, of
the equation $\psi(\lambda)=0$. It is known (cf. Chapter 8 of
\cite{K}) that $\psi$ is strictly convex with $\psi(0)=0$ and
hence since $\psi(\infty)=\infty$ and $\psi'(0^+)<0$ it follows that
there are exactly two roots in $[  0,\infty)$, one of which is always $0$. For $\mu\in
\mathcal{M}_F[0,\infty)$ we have
\begin{equation}
 \mathbb{P}_\mu (\lim_{t\uparrow \infty} ||X_t|| =0) = e^{-\lambda^* ||\mu||}.
 \label{extinguishpr}
\end{equation}
It is a straightforward exercise (cf. Lemma 2 of \cite{BKMS} or Theorem 2.6 of \cite{Sheu}) to show that the law of $X$ under $\mathbb{P}_\mu$  conditioned on $\mathcal{E}$ is that of another super-diffusion with the same motion component as $X$, but with a new branching mechanism which is given by $\psi^*(\lambda)  = \psi(\lambda+ \lambda^*)$ for $\lambda\geq 0$. Said another way, the aforesaid super-diffusion has semigroup characterised by the non-linear equation (\ref{PDE}) with the quantity $\psi$ replaced by $\psi^*$. We denote its law by $\mathbb{P}^{*}_\mu$.

In this article we shall also assume  that
  \begin{equation}
 \int^\infty\frac{1}{\sqrt{\int_{\lambda^*}^\xi \psi(u) {\rm d}u}}{\rm d}\xi<\infty. 
 \label{A3}
 \end{equation}
The condition (\ref{A3}) implies in particular that $\int^\infty 1/\psi(\xi){\rm d}\xi<\infty$ (cf. \cite{Sheu}) which in turn guarantees that the event
$
\mathcal{E}
$
 agrees with the event of {\it extinction}, namely $\{\zeta^{X}<\infty\}$ where
\[
 \zeta^{X} = \inf\{t> 0 : ||X_t || = 0\}.
\]
 Note that (\ref{A3}) cannot be satisfied for branching mechanisms which belong to bounded variation spectrally positive L\'evy processes.

\bigskip

In this paper our objective is to show the robustness of a recent pathwise backbone decomposition, described in detail in the next section, as a mechanism for transferring results from branching diffusions directly into the setting of superprocesses. We shall do this by demonstrating how two related fundamental results for branching Brownian motion with a killing barrier induce the same results for a  $\psi$-super-Brownian motion with killing at the origin. The latter, which we shall denote by $X^+=  \{X^+_t: t\geq 0\}$, can be defined on the same probability space as $X$ by simply taking
\begin{equation}
X^+_t(\cdot) = X_t(\cdot \cap (0,\infty)).
\label{notation}
\end{equation}
For $f\in C^+_b(0,\infty)$, $\mu\in\mathcal{M}_F(0,\infty)$, $x>0$ and $t\geq 0$,
\begin{equation}
 -\log\mathbb{E}_\mu(e^{- \langle f, X_t\rangle}) =  \int_{(0,\infty)}u_f(x, t)\mu({\rm d}x), \, \, t\geq 0,
 \label{prePDE}
\end{equation}
 describes the semi-group of {\color{black} $X$}, where $u_f$ is the unique positive solution to
 {\color{black}
       \begin{equation}\label{PDE}
 u_f(x,t)={\rm E}_x^{-\rho}[f(B_{t\wedge\tau_0})] - {\rm E}_x^{-\rho}\left[\int_0^{t\wedge\tau_0} \psi(u_f(B_s, t-s)){\rm d}s\right] \qquad x,t\geq 0.
       \end{equation}
Here, ${\rm E}^{-\rho}_x$ is expectation with respect to ${\rm P}^{-\rho}_x$, under which $\{B_t: t\geq 0\}$ is a Brownian motion with  drift $-\rho$, issued from $x\geq 0$ and $\tau_0 = \inf\{t>0 : B_t <0\}$.} 
The reader is referred to Theorem 1.1 of  Dynkin \cite{Dyn1991}, Proposition 2.3 of Fitzsimmons \cite{Fitz} and Proposition 2.2 of Watanabe \cite{watanabe1968} for further details; see also Dynkin  \cite{Dyn1993, Dyn2002} for a general overview.

\bigskip

Our first result, based on the branching particle analogue in \cite{HHK},  shows that the  classical growth of the right most point in the support and its intimate {\color{black} relation with non-negative stationary solutions to (\ref{PDE}) can also be seen in the superprocess context. Specifically, we mean solutions of the form $u(x,t) = \Phi(x)$, which necessarily solve
\[
\Phi(x) = {\rm E}^{-\rho}_x[\Phi(B_{t\wedge\tau_0})] - {\rm E}_{x}^{-\rho}\left[\int_0^{t\wedge \tau_0} \psi(\Phi(B_s)){\rm d}s\right], \qquad x\geq 0.
\]
If we additionally suppose, for technical reasons which are soon to become apparent,  that
$\Phi$ monotonically  connects zero the origin to $\lambda^*$ at $+\infty$, then it is a straightforward exercise using classical Feynman-Kac representation of solutions to ODEs
to show that $\Phi$ solves the differential equation
\begin{equation}
\frac{1}{2}\Phi''(x) - \rho\Phi'(x) - \psi(\Phi(x)) =0 \text{ on }x>0\text{ with }
\Phi (0+) = 0 \text{ and }\Phi(+\infty) = \lambda^*.
\label{FKPP}
\end{equation}
In that case we call $\Phi$ a wave solution to (\ref{FKPP}).}

\begin{theorem}[Strong law for the support]\label{I}
Define
\begin{equation}
R^{X}_t:=\inf\{y>0:X_t(y,\infty)=0\}=\inf\{y>0:X^+_t(y,\infty)=0\}
\label{rightmost-X}
\end{equation}
and
denote the extinction time of $X^+$ by
\[
\zeta^X_+= \inf\{t>0 : ||X^+_t|| =0\}.
\]
\begin{itemize}
\item[(i)] Assume that $-\infty<\rho<\sqrt{2\alpha}$. Then, for all $x>0$,
\begin{equation}\label{RMP1}
 \lim_{t\rightarrow\infty}\frac{R^{X}_t}{t}= \sqrt{2\alpha}-\rho\text{ on } \{\zeta^{X}_+=\infty\},
\end{equation}
 $\P_{\delta_x}$-almost surely and
 \begin{equation}
\Phi(x):=-\log \P_{\delta_x}(\zeta^{X}_+<\infty),\,\mbox{for all }\, x>0.
\end{equation}
is the unique wave solution to (\ref{FKPP}).

\item[(ii)]  For all $\rho\geq \sqrt{2\alpha}$ there exists no monotone wave solution to (\ref{FKPP}) and
\[
\P_{\delta_x}(\zeta^{X}_+<\infty)=1\mbox{ for all } x>0.
\]
 \end{itemize}
\end{theorem}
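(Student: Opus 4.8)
The plan is to transfer the known branching-Brownian-motion results of \cite{HHK} through the backbone decomposition. The backbone embedding (to be recalled in the next section) realises $X$ — conditioned appropriately — on the same probability space as a branching Brownian motion $Z$ with drift $-\rho$, killed at the origin, whose branching mechanism is built from $\psi$ and $\lambda^*$; immigration of ``dressed'' subtrees occurs along $Z$, but crucially all immigrated mass dies out, so the right-most point $R^X_t$ and the event $\{\zeta^X_+=\infty\}$ are governed by the backbone $Z$ alone. I would first make this precise: show $\{\zeta^X_+=\infty\}$ coincides (up to a null set) with the event that the backbone $Z$ survives, and show that $R^X_t$ and $R^Z_t$ differ by $o(t)$ — the immigrated subtrees started before time $t$ have been alive for a bounded-below time only near the ``front'', and the $\psi^*$-superprocess pieces have right-most support growing sublinearly from their birth points, so the leading-order spread is that of $Z$. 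The branching rate and offspring law of $Z$ are exactly those for which \cite{HHK} apply, with the role of the killing barrier at the origin as in that paper.

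Next, for part (i), I would invoke the \cite{HHK} strong law for branching Brownian motion with a barrier: under $-\infty<\rho<\sqrt{2\alpha}$ the process survives with positive probability and, on survival, $R^Z_t/t\to\sqrt{2\alpha}-\rho$ almost surely. Combined with the comparison $|R^X_t-R^Z_t|=o(t)$ and the identification of survival events, this yields \eqref{RMP1}. For the wave-equation statement, note $\Phi(x)=-\log\P_{\delta_x}(\zeta^X_+<\infty)$; using \eqref{extinguishpr}-type reasoning for the killed process and the semigroup equation \eqref{PDE}, $\Phi$ is a stationary solution, hence (given the monotone connection from $0$ to $\lambda^*$, which must itself be argued from the backbone: $\Phi(0+)=0$ because small initial mass near the barrier is killed quickly, $\Phi(+\infty)=\lambda^*$ because far from the barrier the killing is asymptotically irrelevant and one recovers the unconditioned extinction probability $e^{-\lambda^*}$) it solves \eqref{FKPP}. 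Uniqueness then follows by pulling back the uniqueness of the travelling wave in \cite{HHK}: any wave solution $\tilde\Phi$ to \eqref{FKPP} would, via \eqref{PDE}, produce a solution to the corresponding FKPP equation for the branching system, and \cite{HHK} asserts that is unique up to translation, with the boundary conditions at $0$ and $+\infty$ pinning down the translate.

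For part (ii), when $\rho\ge\sqrt{2\alpha}$ the backbone \cite{HHK} result says the barrier branching Brownian motion dies out almost surely, so $Z$ becomes extinct; since the immigrated dressed subtrees also die out (they are subcritical $\psi^*$-pieces), $X^+$ becomes extinct almost surely, giving $\P_{\delta_x}(\zeta^X_+<\infty)=1$. A monotone wave solution to \eqref{FKPP} would force $\Phi(x)=-\log\P_{\delta_x}(\zeta^X_+<\infty)=0$ identically, contradicting $\Phi(+\infty)=\lambda^*>0$; alternatively, such a wave would induce a non-trivial travelling wave for the branching system, which \cite{HHK} excludes in this regime.

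The main obstacle I anticipate is the rigorous control of the immigrated mass: one must show that the cloud of $\psi^*$-subprocesses dressed onto the backbone does not produce a right-most point that outruns $Z$, and does not create a ``survival'' event disjoint from survival of $Z$. This requires a uniform estimate on how fast a critically-or-subcritically conditioned superprocess spreads before extinction, together with a Borel--Cantelli / summability argument over the (Poissonian) immigration times and locations along the backbone path — the delicate point being that immigration accumulates near the surviving backbone particles, precisely where the front is. I would handle this by a first-moment computation for $X^+_t(R^Z_t+\varepsilon t,\infty)$ using the many-to-one formula for the dressed superprocess and the Gaussian tail of the underlying killed Brownian motion, showing it is summable along $t\in\mathbb{N}$ for every $\varepsilon>0$.
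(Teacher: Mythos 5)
Your overall architecture (backbone skeleton $Z$ plus transfer of the \cite{HHK} strong law and wave-equation results) is the same as the paper's, but two of your key steps contain genuine gaps, and in both cases the paper takes a different, cleaner route precisely to avoid them. First, your upper bound rests on showing $R^X_t-R^Z_t=o(t)$ by a first-moment computation for $X^+_t(R^Z_t+\varepsilon t,\infty)$ followed by Borel--Cantelli. For a measure-valued process this does not work as stated: $X_t(A)$ is real-valued, so $\mathbb{P}(X_t(A)>0)$ is \emph{not} bounded by $\mathbb{E}[X_t(A)]$ (there is no analogue of the integer-valued counting bound $\mathbb{P}(N>0)\le \mathbb{E}N$ available for branching Brownian motion), and controlling positivity of mass requires the log-Laplace functional or excursion-measure estimates, not first moments. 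The paper sidesteps the entire issue: it bounds $R^X_t$ above by $R^Y_t$, the right-most point of the superprocess \emph{without} killing at the origin, and quotes Corollary 3.2 of \cite{KLMSR} for $\lim R^Y_t/t=\sqrt{2\alpha}-\rho$; no control of the dressed mass relative to the skeleton is ever needed, because the lower bound only uses the trivial inclusion $R^{Z^X}_t\le R^{\Lambda^X}_t$.

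Second, your identification of $\{\zeta^X_+=\infty\}$ with survival of the skeleton is asserted via ``all immigrated mass dies out.'' The easy inclusion is $\{\zeta^{\Lambda^X}_+<\infty\}\subseteq\{\zeta^{Z^X}_+<\infty\}$ (no mass, no Poisson skeleton). The direction you actually need for the strong law is the converse, and ``each immigrated $\psi^*$-piece becomes extinct'' does not immediately give it: the immigration along the skeleton is driven in part by the infinite excursion measure $\mathbb{N}^*$, so infinitely many pieces are grafted on over any time interval and one must show their extinction times do not accumulate at $+\infty$. The paper avoids this by an indirect argument: it shows that any monotone wave solution $\phi$ of (\ref{FKPP}) satisfies $e^{-\phi(x)}=\mathbf{P}_{\delta_x}(\zeta^{Z^X}_+<\infty)$ via the Poissonisation identity $\mathbf{E}_{\delta_x}[e^{\langle\log(1-\phi/\lambda^*),Z^X_t\rangle}]=e^{-\phi(x)}$ together with the right-most-particle asymptotics, and separately that $-\log\mathbf{P}_{\delta_x}(\zeta^{\Lambda^X}_+<\infty)$ is such a wave solution; equality of the two probabilities plus the one easy inclusion then forces the events to coincide, and simultaneously delivers uniqueness of the wave. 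Your alternative uniqueness route (pulling back to \cite{HHK} via $\Psi=1-\Phi/\lambda^*$) is legitimate in principle --- it is exactly the correspondence noted in Remark \ref{trick} --- but requires verifying that the \cite{HHK} uniqueness argument extends from dyadic to the general offspring law $\{p_n\}$ of the skeleton, which is why the paper prefers its self-contained probabilistic computation. Your boundary-condition claims ($\Phi(0+)=0$, $\Phi(+\infty)=\lambda^*$) are correct in spirit but the second needs the exit-measure/total-mass CSBP argument the paper gives, not just the heuristic that the barrier is asymptotically irrelevant.
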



\begin{remark}\rm
Whilst Theorem \ref{I}  offers results on the existence and uniqueness of solutions to (\ref{FKPP}) we do not claim that these are necessarily new.
Indeed one may extract the same or similar results using the methods in, for example Kamataka \cite{Kam}, Uchyama \cite{U78} and Pinsky \cite{Pin}.  See also the discussion in Remark \ref{trick} below.
\end{remark}

Our second result looks at the distribution of mass that is absorbed at the origin, when $\rho$ takes the critical value $\sqrt{2\alpha}$,  in the spirit of recent results of Addario-Berry and Broutin \cite{AB}, A\"id\'ekon et al. \cite{AHZ} and Maillard \cite{Mal}.  In order to describe this result we need to introduce the concept of Dynkin's exit measures.

{\color{black}For each $x\in\mathbb{R}$,  suppose we defined the superprocess $Y  = \{Y_t : t\geq 0\}$ under $\mathbb{Q}_{\delta_x}$ to have the same branching mechanism as  $(X, \mathbb{P}_{\delta_x})$ however, the underlying motion associated with $Y$ is that of a Brownian motion with drift $-\rho$ (i.e. no stopping at $0$). The existence of $(Y, \mathbb{Q}_{\delta_x})$ is justified through the same means as for $(X, \mathbb{P}_{\delta_x})$. In principle it is possible to construct these two processes on the same probability space, however, this is unnecessary for our purposes. For each $z,t\geq 0$, define the space-time domain $D^t_{-z} = \{(x,u) \in \mathbb{R}\times[0,\infty): u<t, x>-z\}$.
According to Dynkin's theory of exit measures {\color{black}outlined in}
Section 7 of \cite{Dyn2001} and Section 1 of \cite{Dynkin-Kuznetsov}, it is possible to describe the mass in the
superprocess $Y$ as it first exits the  domain $D^t_{-z}$. 
In particular, according to the characterisation  for branching Markov exit measures given in Section 1.1 of \cite{Dynkin-Kuznetsov}, 
the random measure $Y_{D^t_{-z}}$ is
 supported on $\partial D^t_{-z} = (\{-z\}\times[0,t))\cup([-z,\infty)\times\{t\})$ and is characterised by the Laplace functional
 \[
 \mathbb{Q}_{\delta_x}(e^{-\l f,Y_{D^t_{-z}}\r}) = e^{ - u^z_f(x,t) },
 \]
 where $x\geq -z$, $f\in C_b([-z,\infty)\times[0,\infty))$ and $u^z_f(x,t)$ uniquely solves, amongst non-negative solutions, (cf. Theorem 6.1 of \cite{Dyn2001}) the equation
\begin{equation}
   u^z_f(x,t)  ={\rm E}^{-\rho}_{x}[f(B_{t\wedge \tau_{-z}}, t\wedge \tau_{-z})]
   - {\rm E}^{-\rho}_x\left[\int_0^{t\wedge \tau_{-z}}
   \psi(u^z_f(B_u, t-u))
    {\rm d}u\right]\qquad x\geq -z, t\geq 0,
       \label{PDE3a}
\end{equation}
where $\tau_{-z} = \inf\{t>0 : B_t<-z\}$. Intuitively speaking, one should think of $Y_{D^t_{-z}}$ as the analogue of the atomic measure supported on $\partial D^t_{-z} $ which describes the collection of particles and their space-time position in a branching Brownian motion with drift $-\rho$ who are  first in their  genealogical line of descent to exit the space-time domain $(-z,\infty)\times[0,t)$.
}

In the case that $\rho\geq \sqrt{2\alpha}$, it was shown in Theorem 3.1 of \cite{KLMSR}  that the the limiting random measure $Y_{D_{-z}} = \lim_{t\uparrow\infty}Y_{D^t_{-z}}$ (which exists almost surely by monotonicity) is almost surely finite and has total mass which satisfies
\[
\mathbb{Q}_{\delta_x} (e^{- \theta||Y_{D_{-z}}|| }) = e^{- v_\theta(x+z)},
\]
for $\theta\geq 0$, $x\geq -z$, where
\[
\frac{1}{2}v_\theta''(x) - \rho v_\theta'(x) - \psi(v_\theta(x)) = 0,
\]
with $v_\theta(0) = \theta$.
In particular, $\{v_\theta(x):x\geq 0\}$ is the semigroup
of a continuous-state branching process with branching mechanism which satisfies
\[
\psi_{D}(\lambda) = \Psi'(\Psi^{-1}(\lambda)),
\]
for $\lambda\in[0,\lambda^*]$, where
 $\Psi$ is the unique monotone solution to the wave equation
\begin{equation}
\frac{1}{2}\Psi''(x) + \rho\Psi'(x) - \psi(\Psi(x)) = 0 \text{ on }\mathbb{R}\text{ with }\Psi(-\infty) = \lambda^* \text{ and }\Psi(+\infty) = 0.
\label{PSI}
\end{equation}
 Indeed, it was shown in Theorem 3.1 of \cite{KLMSR} that $||Y_D||: = \{||Y_{D_{-z}}||: z\geq 0\}$ is a continuous-state branching process with growth rate $\rho + \sqrt{  \rho^2- 2\alpha}$.

We are now ready to state our second main result,  based on the branching Brownian motion  analogue in \cite{Mal}, which in particular focuses on the case that the underlying motion has a critical speed $\sqrt{2\alpha}$.

\begin{theorem}[Absorbed mass at criticality]\label{III} Set $\rho = \sqrt{2\alpha}$.
Assume that for some $\varepsilon>0$,
\begin{equation}
\int_{[1,\infty)}x(\log x)^{2+\varepsilon}\Pi({\rm d}x)<\infty.
\label{Pi-moment}
\end{equation}
Then for each $z,x>0$ we have
\begin{equation}
\mathbb{Q}_{\delta_x}(||Y_{D_{-z}}||>t)\sim\sqrt{2\alpha}\frac{(x+z)e^{(x+z)\sqrt{2\alpha}}}{t(\log t)^2}.\notag
\end{equation}
as $t\uparrow\infty$.
\end{theorem}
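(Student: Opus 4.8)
\medskip
\noindent{\it \bf Proof strategy.} The plan is to transfer Maillard's branching Brownian motion theorem \cite{Mal} to $Y$ through the pathwise backbone decomposition of \cite{BKMS}. Recall that under $\mathbb{Q}_{\delta_x}$ the superprocess $Y$ may be built as a \emph{dressed backbone}: an auxiliary branching Brownian motion $\mathcal Z$ with drift $-\rho$, branching at rate $\psi'(\lambda^*)$ with the offspring law prescribed in \cite{BKMS} and issued from a $\mathrm{Poisson}(\lambda^*)$ number of particles at $x$; along each trajectory of $\mathcal Z$, and at each of its branch points, one immigrates independent copies of the (subcritical) $\psi^*$-super-Brownian motion with drift $-\rho$, where $\psi^*(\cdot)=\psi(\cdot+\lambda^*)$, together with an independent initial $\psi^*$-clump $\Lambda$. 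Two features of $\mathcal Z$ are decisive. First, the Malthusian parameter of $\mathcal Z$ equals $\alpha$, so at $\rho=\sqrt{2\alpha}$ its right-most particle has asymptotic speed $\sqrt{2\alpha}-\rho=0$: absorption of $\mathcal Z$ at the level $-z$ is therefore precisely the critical-barrier problem of \cite{Mal}. Second, the backbone offspring number $L$ is governed by the jumps of $\Pi$ (a jump of size $y$ of $Y$ spawns a $\mathrm{Poisson}(\lambda^* y)$ cloud of new prolific lines), so that $\mathbb{E}[L(\log L)^{2+\varepsilon}]<\infty$ precisely when (\ref{Pi-moment}) holds, which is the moment hypothesis needed in \cite{Mal}.

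Let $N_{-z}$ be the number of genealogical lines of $\mathcal Z$ that are first in their line to reach $-z$. Since $\mathcal Z$ is at the critical speed, the branching Brownian motion obtained by killing $\mathcal Z$ at $-z$ becomes extinct almost surely; hence $N_{-z}<\infty$ and the backbone tree pruned at its first crossings of $-z$ is a.s.\ finite. Maillard's theorem, applied to $\mathcal Z$ issued from its $\mathrm{Poisson}(\lambda^*)$ initial configuration at distance $x+z$ from the barrier, yields
\[
\mathbb{Q}_{\delta_x}(N_{-z}>n)\ \sim\ \lambda^*c_0\,\frac{(x+z)e^{\sqrt{2\alpha}(x+z)}}{n(\log n)^2},\qquad n\uparrow\infty,
\]
for some $c_0>0$ coming from \cite{Mal}. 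As exit measures are additive over independent constituents, $\l f,Y_{D_{-z}}\r=\l f,\Lambda_{D_{-z}}\r+\sum_i\l f,(\mathrm{clump}_i)_{D_{-z}}\r$, the sum running over the $\psi^*$-clumps immigrated while their carrying backbone trajectory still lies in $\{x>-z\}$; so $||Y_{D_{-z}}||$ is made up entirely of the exit masses of these clumps. Grouping them by the edges of the pruned (finite) backbone tree gives $||Y_{D_{-z}}||=||\Lambda_{D_{-z}}||+\sum_e V_e$, where, conditionally on the tree, the $V_e$ are independent; because $\psi^*$ is subcritical with the exponentially light L\'evy measure $e^{-\lambda^* x}\Pi(\mathrm dx)$, each $V_e$ and also $||\Lambda_{D_{-z}}||$ have finite exponential moments, so the $\Lambda$-term is negligible for the tail.

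It remains to pass from the count $N_{-z}$ to the mass. The pruned tree carries of order $N_{-z}$ edges, and on the rare event $\{N_{-z}\approx n\}$ — on which $\mathcal Z$ lingers above $-z$ long enough to build up of order $n$ lines before all are absorbed — a conditional law of large numbers for the $V_e$ should give $||Y_{D_{-z}}||=\bar m\,N_{-z}(1+o(1))$ for an explicit mean clump-exit-mass $\bar m>0$ per absorbed line, whence $\mathbb{Q}_{\delta_x}(||Y_{D_{-z}}||>t)\sim\mathbb{Q}_{\delta_x}(N_{-z}>t/\bar m)$ is of the asserted form $\mathrm{const}\cdot(x+z)e^{\sqrt{2\alpha}(x+z)}/(t(\log t)^2)$. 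That $\mathrm{const}=\sqrt{2\alpha}$ (equivalently $\bar m\lambda^*c_0=\sqrt{2\alpha}$) is most cleanly read off from the exact Laplace functional $\mathbb{Q}_{\delta_x}(e^{-\theta||Y_{D_{-z}}||})=e^{-v_\theta(x+z)}$ of \cite{KLMSR}: since $v_\theta(x+z)=\Psi(\Psi^{-1}(\theta)-(x+z))$ and, at the critical speed, the monotone wave $\Psi$ of (\ref{PSI}) decays like $\Psi(y)\sim B\,y\,e^{-\sqrt{2\alpha}y}$ as $y\to\infty$ for some $B>0$ — this standard critical rate being exactly what (\ref{Pi-moment}) secures — a short computation in which $B$ cancels gives
\[
v_\theta(x+z)=\theta\,e^{\sqrt{2\alpha}(x+z)}\Big(1-\tfrac{\sqrt{2\alpha}(x+z)}{\log(1/\theta)}+o\big(\tfrac1{\log(1/\theta)}\big)\Big),\qquad \theta\downarrow0,
\]
and a Karamata/de Haan Tauberian theorem for the slowly varying factor $1/\log(1/\theta)$ converts this expansion straight into $\mathbb{Q}_{\delta_x}(||Y_{D_{-z}}||>t)\sim\sqrt{2\alpha}\,(x+z)e^{\sqrt{2\alpha}(x+z)}/\big(t(\log t)^2\big)$.

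The principal obstacle is the count-to-mass step: one must control the geometry of the pruned backbone tree conditioned to be atypically large and show that the immigration it sends across $-z$ concentrates at $\bar m\,N_{-z}$. Pinning down the constant $\sqrt{2\alpha}$ — whether by tracking $c_0$ and $\bar m$ through the backbone construction, or via the Tauberian computation above together with the sharp wave asymptotic $\Psi(y)\sim Bye^{-\sqrt{2\alpha}y}$ — is the finer analytic point.
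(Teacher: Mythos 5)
Your overall strategy -- transferring Maillard's result for the absorbed count of the backbone $Z^Y$ into a statement about the absorbed mass of $Y$ -- is the paper's strategy, and your identification of the relevant moment transfer (that (\ref{Pi-moment}) yields $\sum_{n\geq 2}n(\log n)^{2+\varepsilon}p_n<\infty$) is correct, although you assert it where the paper proves it (Lemma \ref{integraltest}; the Poisson weights $(\lambda^*y)^n e^{-\lambda^* y}/n!$ against $\Pi({\rm d}y)$ make this a genuine, if routine, computation). The gap is in the count-to-mass step, which you yourself flag as ``the principal obstacle'': neither of your two routes closes it. Route A (a conditional law of large numbers $||Y_{D_{-z}}||=\bar m\,N_{-z}(1+o(1))$ on the rare event $\{N_{-z}\approx n\}$) would require controlling the geometry of the pruned backbone tree conditioned to be atypically large, which is a substantial piece of work you do not carry out. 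Route B rests on the critical-speed wave asymptotic $\Psi(y)\sim B\,y\,e^{-\sqrt{2\alpha}y}$, which for a general branching mechanism under (\ref{Pi-moment}) is not an off-the-shelf fact; establishing it is essentially of the same order of difficulty as the theorem itself.

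The idea you are missing is that the Poissonisation half of the backbone decomposition turns the count-to-mass step into an exact algebraic identity, so that no law of large numbers and no wave asymptotic are needed. Applied to the exit system (after checking -- this is the paper's Step 1 -- that the backbone of the CSBP $||Y_D||$ really is the absorbed-count process $||Z^Y_D||$, which is done by matching branching generators via $\Theta(x)=1-\Psi(-x)/\lambda^*$), it says that conditionally on $||Y_{D_{-z}}||$ the count $||Z^Y_{D_{-z}}||$ is Poisson with parameter $\lambda^*||Y_{D_{-z}}||$. Writing $w_z(\theta)=\mathbb{Q}_{\delta_0}(e^{-\theta||Y_{D_{-z}}||})$ and $F_z$ for the single-ancestor generating function of the count, this gives
\begin{equation*}
w_z(\lambda^*(1-s))=\exp\{-\lambda^*(1-F_z(s))\},\qquad s\in[0,1].
\end{equation*}
Differentiating twice and inserting Maillard's asymptotic $F_z''(1-s)\sim \sqrt{2\alpha}\,z e^{z\sqrt{2\alpha}}/(s(\log(1/s))^2)$ (together with $F_z'(1-s)\to e^{z\sqrt{2\alpha}}$) yields $w_z''(u)\sim \sqrt{2\alpha}\,ze^{z\sqrt{2\alpha}}/(u(\log(1/u))^2)$ as $u\downarrow 0$; a single application of Feller's Tauberian theorem to the measure $y^2\,\mathbb{Q}_{\delta_0}(||Y_{D_{-z}}||\in{\rm d}y)$, followed by an integration by parts, then delivers the stated tail with the constant $\sqrt{2\alpha}$ appearing automatically. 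Without this exact identity (or a completed version of one of your two routes) the proposal does not constitute a proof.
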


Note that in terms of our earlier notation, we see that,  $X^+_t$ under $\mathbb{P}_{\delta_x}$ has the same law as $Y_{D^t_{0}}|_{(0,\infty)\times \{t\}}$ under $\mathbb{Q}_{\delta_x}$. Whilst Theorem \ref{I} therefore concerns the spatial evolution of the support of the measure $Y_{D^t_0}$ away from the origin for  speeds $\rho> \sqrt{2\alpha}$, by contrast Theorem \ref{III} above addresses the distribution of mass accumulated at the origin by the same measure, at the critical speed $\sqrt{2\alpha}$.

\bigskip

The remainder of this paper is structured as follows. In the next section we give a brief overview of the backbone decomposition for $X$, noting that similar decompositions also hold for a number of other processes used in this article. In Section \ref{pfI} we prove Theorem \ref{I} and in Section \ref{pfIII} we prove Theorem \ref{III}.

\section{The backbone decomposition and Poissonisation}

As alluded to above, our results are largely driven by the backbone decomposition, recently described in the pathwise sense by \cite{BKMS} for {\it conservative} processes. Note that  backbone decompositions have been known in the earlier and more analytical setting of semigroup decompositions through the work of \cite{EO} and \cite{EP} as well as in the pathwise setting in the work of \cite{SV1, SV2}.


To describe the backbone decomposition in detail, 
consider the process $\{\Lambda^X_t : t \geq 0\}$ which has the following pathwise construction.
First sample from a branching particle diffusion with branching generator
\begin{equation}
F(r) = q\left(\sum_{n\geq 0} p_n r^n - r\right) =  \frac{1}{\lambda^*}\psi(\lambda^*(1-r)), \, r\in[0,1],
\label{F}
\end{equation}
and particle motion which is that of a Brownian motion {\color{black} with drift $-\rho$}, {\color{black} stopped} at the origin.
Note that in the above generator, we have that $q$ is the rate at which individuals reproduce and $\{p_n: n\geq 0\}$ is the offspring distribution. With the particular branching generator given by \eqref{F}, $q = \psi'(\lambda^*)$, $p_0 = p_1 =0$, and for $n\geq 2$,  $p_n : = p_n[0,\infty)$ where for $y\geq 0$, we defined the measure $p_n(\cdot)$ on $\{2,3,4,\ldots\}\times[0,\infty)$ by
\[
p_n({\rm d}y) =  \frac{1}{\lambda^* \psi'(\lambda^*)}\left\{\beta (\lambda^*)^2\delta_0({\rm d}y)\mathbf{1}_{\{n=2\}} + (\lambda^*)^n \frac{y^n}{n!} e^{-\lambda^*y} \Pi({\rm d}y)\right\}.
\]
If we denote the aforesaid branching particle diffusion by  $Z^X = \{Z^X_t: t\geq 0\}$ then we shall also insist that the configuration of particles in space at time zero, $Z_0$, is given by an independent    Poisson random measure with intensity $\lambda^*\mu$.
Next, 
{\it dress} the branches of the spatial tree that describes
the trajectory of $Z^X$ in such a way that a particle at the
space-time position $(\xi, t)\in[0,\infty)^2$ has an independent
$\mathcal{X}$-valued trajectory grafted on to it with rate
\[
2\beta {\rm d}\mathbb{N}_\xi^* + \int_0^\infty y e^{-\lambda^* y}\Pi({\rm d}y){\rm d}\mathbb{P}^{*}_{\xi\delta_y}.
\]
Here the measure $\mathbb{N}_\xi^*$ is the excursion measure (cf. \cite{Dynkin-Kuznetsov, LG, El}) on the space $\mathcal{X}$  which satisfies
\[
\mathbb{N}_x^*(1- e^{-\l f,X_t\r}) = u^*_f(x,t),
\]
for $x,t\geq 0$ and $f\in C^+_b[0,\infty)$, where $ u^*_f(x,t)$ is the unique solution to (\ref{PDE}) with the branching mechanism $\psi$ replaced by $\psi^*$.
 Moreover, on the event that an individual in $Z^X$ dies and branches into $n\geq 2$ offspring at spatial position $\xi\in[0,\infty)$, with probability $p_n({\rm d}y)\mathbb{P}^{*}_{y\delta_\xi}$, an additional independent $\mathcal{X}$-valued trajectory is grafted on to the space-time branching point. The quantity $\Lambda^X_t$ is now understood to be the total dressed mass present at time $t$ together with the mass present at time $t$ of an independent copy of $(X,\mathbb{P}^{*}_{\mu})$ issued at time zero.
We denote the law of $(\Lambda^X, Z^X)$ by $\mathbf{P}_{\mu}$.

The backbone decomposition is now summarised by the following theorem lifted from Berestycki et al. \cite{BKMS}.

\begin{theorem}\label{main-1}
For any $\mu\in\mathcal{M}_F(\mathbb{R}^d)$, the process $(\Lambda^X,  \mathbf{P}_\mu)$ is Markovian and has the same law as  $(X,  \mathbb{P}_\mu )$. Moreover, for each $t\geq 0$, the law of $Z^X_t$ given $\Lambda^X_t$ is that of a Poisson random measure with intensity measure $\lambda^*\Lambda^X_t$.
\end{theorem}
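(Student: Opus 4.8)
The claim is a specialisation of the pathwise backbone decomposition of \cite{BKMS}: the only departure from the setting there is that the particle motion is now a Brownian motion with drift $-\rho$ stopped at the origin, which is still a conservative Markov process once $0$ is retained in the state space $[0,\infty)$ as an absorbing point, so one option is simply to check that this motion meets the hypotheses of \cite{BKMS} and invoke the theorem. Arguing directly, the plan is to work at the level of Laplace functionals. Observe first that $(\Lambda^X,Z^X)$ is Markovian by construction: from its state at time $t$ --- the positions of the backbone particles together with the immigrated mass $\Lambda^X_t$ --- the backbone evolves as the Markov branching particle system with generator (\ref{F}), while the immigrated mass evolves as an autonomous superprocess with branching mechanism $\psi^*$ and the same stopped motion, fed by the fresh immigration generated along the backbone. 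Hence it suffices to compute the joint one-dimensional Laplace functional of $(\Lambda^X_t,Z^X_t)$, to read off from it that $\Lambda^X_t$ has the law of $X_t$ under $\mathbb{P}_\mu$ and that $Z^X_t$ given $\Lambda^X_t$ is a Poisson random measure of intensity $\lambda^*\Lambda^X_t$, and then to recover the Markov property of $\Lambda^X$ alone --- and the equality in law of $(\Lambda^X,\mathbf{P}_\mu)$ and $(X,\mathbb{P}_\mu)$ as processes --- by integrating out the conditionally Poissonian $Z^X$ inside the joint Markov property.

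To compute the joint functional, fix $f,g\in C_b^+[0,\infty)$ and condition on the backbone $Z^X$. Its initial configuration is a Poisson random measure of intensity $\lambda^*\mu$, the $\mathcal{X}$-valued clouds grafted along distinct branches and at distinct branch points are conditionally independent, and there is a further independent copy of $(X,\mathbb{P}^*_\mu)$; the exponential formula for Poisson random measures then gives
\[
-\log\mathbf{E}_\mu\!\left(e^{-\langle f,\Lambda^X_t\rangle-\langle g,Z^X_t\rangle}\right)=\big\langle\, u^*_f(\cdot,t)+\lambda^*\big(1-h_{f,g}(\cdot,t)\big),\ \mu\,\big\rangle,
\]
where $u^*_f$ is the solution of (\ref{PDE}) with $\psi$ replaced by $\psi^*$ and $h_{f,g}(x,t)$ is the joint Laplace functional at $(f,g)$ of the dressed mass and the surviving backbone particles produced up to time $t$ by a single backbone particle issued from $(x,0)$ and all its descendants. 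Conditioning on the first event along that particle's life --- the competition between its drift Brownian motion stopped at $0$, its branching clock of rate $q=\psi'(\lambda^*)$, and the immigration along its branch at rate $2\beta\,{\rm d}\mathbb{N}^*_\xi+\int_0^\infty y e^{-\lambda^* y}\Pi({\rm d}y)\,{\rm d}\mathbb{P}^*_{\xi\delta_y}$ --- yields a Feynman--Kac type integral equation for $h_{f,g}$ of the same structure as (\ref{PDE}).

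The crux is the algebraic identity these contributions satisfy. For $g\equiv 0$, put $w_f=\lambda^*(1-h_{f,0})$ and $U_f=u^*_f+w_f$; using $F(r)=\psi(\lambda^*(1-r))/\lambda^*$, $q=\psi'(\lambda^*)$, $\psi^*(\lambda)=\psi(\lambda+\lambda^*)$ and $\mathbb{N}^*_x(1-e^{-\langle f,X_s\rangle})=u^*_f(x,s)$, and expanding $\psi$ about $\lambda^*$, one assembles the three immigration channels together with the multiple-offspring rule $\{p_n\}$ and checks that $U_f$ solves (\ref{PDE}) with mechanism $\psi$; uniqueness among non-negative solutions then gives $U_f=u_f$, that is $-\log\mathbf{E}_\mu(e^{-\langle f,\Lambda^X_t\rangle})=\langle u_f(\cdot,t),\mu\rangle$. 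For general $g$ the same computation, now carrying an extra factor $e^{-g}$ at the terminal position of each surviving backbone particle, shows $u^*_f+\lambda^*(1-h_{f,g})=u_{f+\lambda^*(1-e^{-g})}$, and therefore
\[
\mathbf{E}_\mu\!\left(e^{-\langle f,\Lambda^X_t\rangle-\langle g,Z^X_t\rangle}\right)=\mathbf{E}_\mu\!\left(e^{-\langle f+\lambda^*(1-e^{-g}),\,\Lambda^X_t\rangle}\right),
\]
which is exactly the Poissonisation statement, the case $g\equiv 0$ giving the identification of $\Lambda^X_t$ with $X_t$. Running the same conditioning at several time points simultaneously strengthens this to: $Z^X_t$ is an independent Poissonisation of $\lambda^*\Lambda^X_t$ given the entire history of $\Lambda^X$ up to time $t$, whence, together with the joint Markov property, $\Lambda^X$ is Markov with the semigroup of $X$. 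I expect the only genuine obstacle to be the verification of the algebraic identity above --- that the dressed branching generator $F$ together with the three immigration rates reconstitutes $\psi$ out of $\psi^*$; this is, however, precisely the computation performed in \cite{BKMS} (see also \cite{EO, EP, SV1, SV2}), and since both $X$ and the backbone particles are stopped at the origin in the same fashion, confining the state space to $[0,\infty)$ introduces nothing new over the general conservative motion treated there.
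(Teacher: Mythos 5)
Your proposal is correct and coincides with what the paper actually does: the paper gives no proof of Theorem \ref{main-1}, stating only that it is ``lifted from Berestycki et al.\ \cite{BKMS}'' --- precisely your first option of checking that the drifting Brownian motion stopped at the origin is still a conservative Markov motion and invoking the theorem of \cite{BKMS}. Your additional Laplace-functional sketch (the joint functional of $(\Lambda^X_t,Z^X_t)$, the identity $u^*_f+\lambda^*(1-h_{f,g})=u_{f+\lambda^*(1-e^{-g})}$, and the strengthening to conditioning on the whole history to get the Markov property) is a faithful outline of the argument in that reference, so nothing further is needed.
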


Not much changes in the above account when we replace the role of $X$ by the superprocess $Y$ or indeed the continuous-state branching process $||Y_D||$. Specifically, for the case of $Y$, the motion of the backbone, $Z^Y$, is that of a Brownian motion with drift $-\rho$ and $\psi$ remains the same. For the case of $||Y_D||$, we may consider the motion process to be that of a particle which remains fixed at a point and the branching mechanism $\psi$ is replaced by $\psi_D$.


\section{Proof of Theorem \ref{I}}\label{pfI}
Proof of (i):  Using obvious notation in light of (\ref{notation}),
and referring to the discussion following Theorem \ref{III}, we
necessarily have that $R^{X}_t$ is equal in law to $ \inf\{y>0 :
Y_{D^t_{0}}|_{(0,\infty)\times \{t\}}(y,\infty) = 0\}, $ and the
latter is $\mathbb{Q}_{\delta_x}$-almost surely bounded above by
$R^Y_t$. It is known from Corollary 3.2 of \cite{KLMSR} that, under
(\ref{A3}),   for any $\rho\leq \sqrt{2\alpha}$,
\[
\lim_{t\rightarrow\infty}\frac{R^Y_t}{t}=\sqrt{2\alpha}-\rho
\]
$\mathbb{Q}_{\delta_x}$-almost surely on the survival set of $Y$.
It follows that, under the same assumptions,
\begin{equation}
\limsup_{t\rightarrow\infty}\frac{R^{X}_t}{t}
\leq \sqrt{2\alpha}-\rho\,\mbox{ on}\,\{\zeta^{X}_+=\infty\},
\label{upper}
\end{equation}
$\mathbb{P}_{\delta_x}$-almost surely.

For the lower bound, note that the backbone decomposition allows us to deduce straight away that, again using obvious notation, on $\{\zeta^{\Lambda^X}_+=\infty\}$, $R^{Z^X}_t\leq R^{\Lambda^X}_t$ holds ${\mathbf P}_{\delta_x}$-almost surely for each $x>0$.
The restriction of the process $Z^X$ to $(0,\infty)$ can be formally identified as a branching Brownian motion with killing at the origin.
In  \cite{HHK} it was shown that a dyadic branching Brownian motion with drift $-\rho$ and killing at the origin which branches at rate $q$ has the property that the right most particle speed is equal to $\sqrt{2q} -\rho$ on survival. In fact careful inspection of their proof shows that it is straightforward to replace dyadic branching by a random number of offspring with mean $m\in(1,\infty)$. In that case the right most speed is equal to $\sqrt{2q(m-1)}-\rho$. Note that for the process $Z^X$, we easily compute from (\ref{F}) that $q(m-1) = \alpha$. 
We now have, 
that
\begin{equation}
 \liminf_{t\rightarrow\infty} \frac{R^{\Lambda^X}_t}{t}\geq \lim_{t\rightarrow\infty}\frac{R^{Z^X}_t}{t}=\sqrt{2\alpha}-\rho\mbox{ on } \{\zeta^{Z^X}_+=\infty\},
 \label{liminf}
\end{equation}
$\mathbf{P}_{\delta_x}$-almost surely. Let us temporarily assume however that  $\{\zeta^{Z^X}_+<\infty\}$  agrees with the event $\{\zeta^{\Lambda^X}_+<\infty\}$ under $\mathbf{P}_{\delta_x}$. Theorem \ref{main-1} now allows us to conclude from (\ref{liminf}) that
\[
\liminf_{t\rightarrow\infty} \frac{R^{X}}{t} \geq \sqrt{2\alpha} - \rho\mbox{ on }\{\zeta^{X}_+ = \infty\}
\]
$\mathbb{P}_{\delta_x}$-almost surely.

To  complete the proof of part (i) we must therefore show that $\{\zeta^{Z^X}_+<\infty\}$  agrees with the event $\{\zeta^{\Lambda^X}_+<\infty\}$ under $\mathbf{P}_{\delta_x}$ and that their common probabilities give the unique solution to (\ref{FKPP}).
To this end, first note that the backbone decomposition, and in particular the Poisson embedding of $Z^X$ in $\Lambda^X$,  gives us that $\{\zeta^{\Lambda^X}_+<\infty\} \subseteq \{\zeta^{Z^X}_+<\infty\}$.
Next note that the backbone decomposition also tells us that $\{Z^X_0(0,\infty) =0 \}\subseteq\{\zeta^{\Lambda^X}_+<\infty\}$. If we define the monotone increasing function $\Phi:[0,\infty)\rightarrow [0,\infty)$ by
\[
e^{-\Phi( x)} = \mathbb{P}_{\delta_x}(\zeta^{X}_+<\infty) =
 \mathbf{P}_{\delta_x}(\zeta^{\Lambda^X}_+<\infty),
\]
so that in particular $\Phi(0) = 0$,
then the previous observations tell us that for $x>0$,
\[
e^{-\lambda^*}\leq e^{-\Phi(x)}\leq  \mathbf{P}_{\delta_x}(\zeta^{Z^X}_+<\infty)<1.
\]
{\color{black} Note that the final inequality above is strict as all initial particles in $Z^X$ may hit the {\color{black}stopping} boundary before branching with positive probability.}
 It is a straightforward exercise to show, using the Markov branching property and the fact that $\Phi(0) = 0$, that $\Phi$ respects the relation
 \begin{equation}
e^{-\Phi(x)} = \mathbb{E}_{\delta_x}(\mathbb{P}_{X_t}(\zeta^X_+ <\infty)) = \mathbb{E}_{\delta_x}(e^{-\l \Phi, X^+_t\r})
{\color{black}= \mathbb{E}_{\delta_x}(e^{-\l \Phi, X_t\r})}
\text{ for all }x, t\geq 0.
\label{mmgfn}
\end{equation}
Inspecting the semi-group evolution equation (\ref{PDE}) for $X$ with data $f  = \Phi$  and taking account of the fact that  its unique solution given by (\ref{prePDE}), we see that $\Phi$ solves the  {\color{black} differential  equation} in  (\ref{FKPP}).

To show that $\Phi(+\infty) = \lambda^*$, note that the law of $\l \Phi, X_t^+\r$ under $\mathbb{P}_{\delta_x}$ is equal to that of $\l \Phi(x+\cdot), Y_{D^t_{-x}}|_{(-x,\infty)\times\{t\}}\r$ under $\mathbb{Q}_{\delta_0}$. Thanks to the monotonicity of $\Phi(x)$ and $Y_{D^t_{-x}}|_{(-x,\infty)\times\{t\}}$ in $x$ and the fact that $0<\Phi(x)\leq \lambda^*$ we have, with the help of dominated convergence,
\begin{equation}
e^{-\Phi(+\infty)} = \lim_{x\uparrow\infty} \mathbb{Q}_{\delta_0}(e^{- \l\Phi(x+\cdot), Y_{D^t_{-x}}|_{(-x,\infty)\times \{t\}} \r}) = \mathbb{Q}_{\delta_0}(e^{-\Phi(\infty) ||Y_t||}).
\label{mustbe}
\end{equation}
On account of the fact that the process $\{||Y_t||: t\geq 0\}$ is a continuous-state branching process with branching mechanism $\psi$ the equality in (\ref{mustbe}) together with the fact that $\Phi(+\infty)\in(0,\lambda^*]$ forces us to deduce that $\Phi(+\infty) = \lambda^*$.

 Now suppose that $\phi$ solves (\ref{FKPP}). The backbone decomposition  tells us that for all $t\geq0$, $Z_t(\cdot)$ given $\Lambda^X_t(\cdot)$ is a Poisson random field with intensity measure
$\lambda^* \Lambda^X_t(\cdot)$. Hence,
\begin{eqnarray*}
\e_{\delta_x}\left[e^{\langle\log(1-\phi/\lambda^*),Z^X_t\rangle}\right]
&=&\e_{\delta_x}\e\left[e^{\langle\log(1-\phi/\lambda^*),Z^X_t\rangle}\bigg|\Lambda^X_t\right]\\
&=&
\e_{\delta_x}\left[\exp\left\{-\int\left(1-e^{\log(1-\phi(y)/\lambda^*)}\right)
\lambda^*\Lambda^X_t({\rm d}y)\right\}\right]
\\&=&
\e_{\delta_x}\left[e^{-\langle\phi,\Lambda^X_t\rangle}\right]\\
&=&e^{-\phi(x)}.
\end{eqnarray*}
Recalling  that $\phi$ is monotone with  $\phi(0+) = 0$ and $\phi(+\infty) = \lambda^*$, and hence that $-\log(1- \phi/\lambda^*)\in[0,\infty)$ so that  $$\langle-\log(1-\phi/\lambda^*),Z^X_t\rangle\geq -\log (1-\phi(R^{Z^X}_t)),$$
 it follows with the help of the known asymptotics of $R_t^{Z^X}$, eg (\ref{liminf}), that
\begin{eqnarray*}
\limsup_{t\rightarrow\infty}\e_{\delta_x}\left[e^{\langle\log(1-\phi/\lambda^*),Z^X_t\rangle}\mathbf{1}_{\{\zeta^{Z^X}_+=\infty\}}\right]&\leq &
\limsup_{t\rightarrow\infty}\e_{\delta_x}\left[ e^{\log (1 - \phi(R^{Z^X}_t)/\lambda^*)}\mathbf{1}_{\{\zeta^{Z^X}_+=\infty\}}\right]=0.
\end{eqnarray*}
Subsequently
\begin{eqnarray}
e^{-\phi(x)}
&=&\mathbf{P}_{\delta_x}(\zeta^{Z^X}_+<\infty)
 +\lim_{t\rightarrow \infty}\e_{\delta_x}\left[e^{-\langle-\log(1-\phi/\lambda^*),Z^X_t\rangle}\mathbf{1}_{\{\zeta^{Z^X}_+=\infty\}}\right]\notag\\
&=&\mathbf{P}_{\delta_x}(\zeta^{Z^X}_+<\infty).
\label{toconclude}
\end{eqnarray}
We conclude from (\ref{toconclude}) that
\[
\Phi(x) = -\log \mathbf{P}_{\delta_x}(\zeta^{\Lambda^X}_+<\infty) =-\log  \mathbf{P}_{\delta_x}(\zeta^{Z^X}_+<\infty)
\]
is the unique monotone solution to (\ref{FKPP}). Moreover,  since $\{\zeta^{\Lambda^X}_+<\infty\} \subseteq \{\zeta^{Z^X}_+<\infty\}$,  we may now also deduce that $\{\zeta^{\Lambda^X}_+<\infty\} = \{\zeta^{Z^X}_+<\infty\}$, $\mathbf{P}_{\delta_x}$-almost surely, which completes the proof of part (i) of the Theorem.

\bigskip

\noindent Proof of (ii): Suppose now that $\rho\geq \sqrt{2\alpha}$.
The estimate $R^{X}_t\leq R^Y_t$ used in (\ref{upper}) now tells us
that $\mathbb{P}_{\delta_x}(\zeta^{X}_+<\infty) = 1$ and hence,
because of the backbone decomposition, it also tells us that
$\mathbf{P}_{\delta_x}(\zeta^{\Lambda^X}_+<\infty) =1$. As noted
earlier, the Poisson embedding of $Z^X$ in $\Lambda^X$ gives us that
$\{\zeta^{\Lambda^X}_+<\infty\}\subseteq\{\zeta^{Z^X}_+<\infty\}$
and hence it follows that
$\mathbf{P}_{\delta_x}(\zeta^{Z^X}_+<\infty) =1$. Suppose now that a monotone
wave solution, $\Phi$, to (\ref{FKPP}) exists. Then the computation
in (\ref{toconclude}) forces us  to conclude that $\Phi\equiv 0$
which is a contradiction. Therefore there can be no solutions to
(\ref{FKPP}). \hfill$\square$

\begin{remark}\label{trick}\rm Whilst Theorem \ref{I} offers results on the existence and uniqueness of solutions to (\ref{FKPP}), Proposition 2 of Pinsky \cite{Pin} and {\color{black} Theorem 1} of Harris et al. \cite{HHK} also offer the rate of decay of monotone solutions at $+\infty$ to the wave equation
\begin{equation}
\frac{1}{2}\Psi''(x) - \rho\Psi'(x) + F(\Psi(x)) \text{ on }x>0\text{ with }\Psi(0+) = 1\text{ and }\Psi(+\infty) = 0,
\label{HarrisFKPP}
\end{equation}
for $\rho<\sqrt{2q}$ where $F(s) = q(s^2-s)$ and $q>0$.
A straightforward inspection of the proof in Theorem 1 of  Harris et al. \cite{HHK} shows that in fact their result on the decay of $\Psi$ holds for more general functions $F$ taking, for example, the form $F(s) = q(\sum_{n=2}^\infty s^n p_n - s)$ for $s\in[0,1]$, $q>0$ where $\{p_n: n\geq 2\}$ is a probability distribution satisfying $F'(1)<\infty$. {\color{black} Specifically, most of the arguments in \cite{HHK} do not require a dyadic offspring distribution such as is imposed there, however, in Section 6 one must take care with the exponential term in the martingale defined in (14). In their terminology, the integrand in the exponential term, $\beta (f(Y_s) - 1)$, needs to be replaced by $G(Y_s)$ where $G(s) = F(s)/s$. Thereafter, the necessary adjustments, which pertain largely to bounds, are relatively obvious.}
 In that case their result reads as follows. For all $\rho<\sqrt{2F'(1)}$
\[
\lim_{x\uparrow\infty}e^{-(\rho- \sqrt{\rho^2+ 2q})x }\Psi(x) = k_\rho
\]
for some $k_\rho\in(0,\infty)$.

Note that when $F$ is given by (\ref{F}) it is straightforward to check that $\Psi$ solves (\ref{HarrisFKPP}) if and only if $\lambda^*(1-\Psi)$ solves (\ref{FKPP}). It follows immediately that when $\rho<\sqrt{2\alpha}$
\[
\lim_{x\uparrow\infty}\exp\left\{-\left(\rho- \sqrt{\rho^2+ 2\psi'(\lambda^*)}\right)x \right\}(1-\Phi(x)/\lambda^*) = k_\rho.
\]\end{remark}

\section{Proof of Theorem \ref{III}}\label{pfIII}

As alluded to above, our objective is to embed an existing result for branching Brownian motion with absorption at the origin into the superprocess setting with the help of the backbone decomposition. For all $x\in\mathbb{R}$ we shall denote by ${\rm Q}_{\delta_x}$ the law of the backbone decomposition, $(Z^Y, \Lambda^Y)$ of $Y$. The existing result in question is due to Maillard \cite{Mal} and when paraphrased in terms of the backbone process $Z^Y$ for $Y$, states that, under the condition that $\sum_{n\geq 2}n(\log n)^{2+\varepsilon}p_n<\infty$, for some $\varepsilon>0$,  and $\rho = \sqrt{2\alpha}$ it follows that for all $x\geq -z$,
\begin{equation}
{\rm Q}_{\delta_x}(||Z^Y_{D_{-z}}||>n|||Z^Y_0|| =1)\sim\frac{\sqrt{2\alpha}(x+z)e^{\sqrt{2\alpha} (x+z)}}{n(\log n)^2}
\label{Mal-asym}
\end{equation}
as $\mathbb{N} \ni n\uparrow\infty$. Here we understand $Z^Y_{D_{-z}}$ to mean the atomic valued measure, supported on $\{-z\}\times[0,\infty)$ which describes the space-time position of particles in the branching diffusion $Z^Y$ who are first in their line of descent to exit the domain $D^\infty_{-z}$. The process $||Z^Y_D||: = \{||Z^Y_{D_{-z}}||: z\geq 0\}$ is known to be a continuous time Galton-Watson (cf. Lemma 3.1 and Proposition 3.2 in \cite{Mal} or Proposition 3 in  \cite{neveu}), which, like its continuous-state space analogue $||Y_D||$, has growth rate $\sqrt{2\alpha}$. Maillard's result follows by first establishing that
\begin{equation}
F_z(s) : = {\rm Q}_{\delta_0}(s^{||Z^Y_{D_{-z}}||} | ||Z^Y_0|| =1)
\label{Fz-def}
\end{equation}
satisfies
\begin{equation}
F_z''(1-s)\sim\frac{\sqrt{2\alpha}ze^{z\sqrt{2\alpha}}}{s(\log(1/s))^2},\qquad\text{as $s\downarrow0$},\label{Fz}
\end{equation}
and then applying a classical Tauberian theorem.

The strategy for our proof of Theorem \ref{III} will be to first show that the moment condition $\sum_{n\geq 2}n(\log n)^{2+\epsilon}p_n<\infty$ is implied by (\ref{Pi-moment}). Thereafter, we shall appeal to an analytical identity that arises through the Poissonisation property of the backbone decomposition, thereby allowing us to convert the asymptotic (\ref{Fz}) into an appropriate asymptotic which leads, again through an application of a Tauberian theorem, to the conclusion of  Theorem \ref{III}. We start with the following lemma. 

\begin{lemma}
If $\int_{[1,\infty)} x(\log x)^{2+\varepsilon}\Pi({\rm d}x)<\infty$ for some $\varepsilon>0$, then $\sum_{n\geq 2}n(\log n)^{2+\varepsilon}p_n<\infty$.
\label{integraltest}
\end{lemma}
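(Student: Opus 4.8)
The plan is to convert the tail integral condition on $\Pi$ into the sum condition on $\{p_n\}$ by recalling the explicit formula
\[
p_n = p_n[0,\infty) = \frac{1}{\lambda^*\psi'(\lambda^*)}\left\{\beta(\lambda^*)^2\mathbf{1}_{\{n=2\}} + (\lambda^*)^n\int_{(0,\infty)}\frac{y^n}{n!}e^{-\lambda^* y}\,\Pi({\rm d}y)\right\}.
\]
The $\delta_0$ term only contributes to $n=2$ and is harmless, so the whole question reduces to showing that
\[
\sum_{n\geq 2} n(\log n)^{2+\varepsilon}(\lambda^*)^n\int_{(0,\infty)}\frac{y^n}{n!}e^{-\lambda^* y}\,\Pi({\rm d}y)<\infty.
\]
First I would apply Tonelli's theorem to interchange the sum and the integral, reducing the claim to a pointwise bound: it suffices to show that there is a constant $C$ such that
\[
g(y) := \sum_{n\geq 2} n(\log n)^{2+\varepsilon}\,\frac{(\lambda^* y)^n}{n!}e^{-\lambda^* y} \leq C\bigl(1 + y(\log y)^{2+\varepsilon}\,\mathbf{1}_{\{y\geq 2\}}\bigr)
\]
for all $y>0$ (or, more conveniently, $g(y)\le C\,y(\log(y\vee e))^{2+\varepsilon}$ for $y$ large and $g(y)$ bounded for $y$ in a compact set), since then $\int g\,{\rm d}\Pi$ is controlled by $\int_{(0,1)}(y\wedge y^2)\Pi({\rm d}y) + \int_{[1,\infty)}y(\log y)^{2+\varepsilon}\Pi({\rm d}y)$, both finite by hypothesis.

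The quantity $g(y)$ is an expectation over a Poisson random variable: if $N_y\sim\mathrm{Poisson}(\lambda^* y)$, then $g(y) = \mathbb{E}\bigl[N_y(\log N_y)^{2+\varepsilon}\mathbf{1}_{\{N_y\geq 2\}}\bigr]$. The heuristic is that $N_y$ concentrates around its mean $\lambda^* y$, so $g(y)\approx \lambda^* y\,(\log(\lambda^* y))^{2+\varepsilon}$, which is exactly the order we want. To make this rigorous I would split the expectation at, say, $N_y\leq 2\lambda^* y$ and $N_y>2\lambda^* y$. On the bulk event the integrand is at most $N_y(\log(2\lambda^* y))^{2+\varepsilon}\le 2\lambda^* y(\log(2\lambda^*y))^{2+\varepsilon}$ since $\mathbb{E}[N_y]=\lambda^* y$, giving the desired order (for $y$ bounded away from $0$; small $y$ contributes a bounded amount because $g$ is continuous and $g(0+)=0$). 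On the tail event I would use a standard Poisson upper-tail bound (Chernoff: $\mathbb{P}(N_y\geq k)\le e^{-\lambda^* y}(e\lambda^* y/k)^k$ for $k\ge\lambda^* y$) together with the crude bound $n(\log n)^{2+\varepsilon}\le C_\varepsilon e^{n}$ — or more simply a second-moment / exponential-moment estimate — to show the tail contribution is $O(1)$ uniformly in $y$, or at worst lower order than $y(\log y)^{2+\varepsilon}$.

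The main obstacle is the bookkeeping in the tail estimate: one must check that the super-polynomial-looking factor $n(\log n)^{2+\varepsilon}$ does not overwhelm the Poisson tail decay, and that the resulting bound is genuinely integrable against $\Pi$ near $0$ as well as near $\infty$. Near $0$ one uses that $g(y)=O(y^2)$ as $y\downarrow 0$ (the $n\geq 2$ truncation kills the linear term, since the smallest contributing term is $n=2$ with weight $(\lambda^* y)^2/2$), which matches the $\int_{(0,\infty)}(y\wedge y^2)\Pi({\rm d}y)<\infty$ assumption; near $\infty$ the concentration argument above gives the $(\log)^{2+\varepsilon}$ matching. Once the pointwise bound $g(y)\le C(y\wedge y^2)(1+(\log(y\vee e))^{2+\varepsilon})$ is established, the conclusion is immediate from Tonelli and the two finiteness hypotheses on $\Pi$.
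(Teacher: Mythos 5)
Your proposal is correct in outline and shares the paper's overall reduction (Tonelli, then a pointwise bound on the Poisson expectation $g(y)=\mathbb{E}[N_y(\log N_y)^{2+\varepsilon}\mathbf{1}_{\{N_y\geq 2\}}]$, $N_y\sim\mathrm{Poisson}(\lambda^* y)$, followed by integration against $\Pi$ using $\int_{(0,\infty)}(y\wedge y^2)\Pi({\rm d}y)<\infty$ near $0$ and the hypothesis near $\infty$). Where you differ is in how the central estimate $g(y)\lesssim y(\log y)^{2+\varepsilon}$ is obtained: the paper exploits the concavity of $x\mapsto(\log(1+x))^{2+\varepsilon}$ beyond a threshold $N_0$ and applies Jensen's inequality to the (conditioned) Poisson law, which yields the bound in one line once one checks that $\log(\mathbb{E}[N_y\mid N_y\geq N_0]+1)\leq 2\log y$ for large $y$; you instead truncate at $N_y\leq 2\lambda^* y$ and control the tail by a Chernoff or moment bound. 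Both work, and your concentration argument is arguably more transparent probabilistically, while Jensen avoids any tail estimate at all. One caveat: the specific crude bound $n(\log n)^{2+\varepsilon}\leq C_\varepsilon e^n$ you float for the tail is too lossy --- $\mathbb{E}[e^{N_y}]=e^{(e-1)\lambda^* y}$ grows too fast to be beaten by the Chernoff decay at level $2\lambda^* y$ --- so you must use your alternative, e.g. $n(\log n)^{2+\varepsilon}\leq Cn^2$ together with Cauchy--Schwarz against $\mathbb{P}(N_y>2\lambda^* y)\leq (e/4)^{\lambda^* y}$, which does close the argument. With that choice made, your plan is a complete and valid proof.
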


\begin{proof}
Appealing to the definition of $\{p_n: n\geq 2\}$  it suffices to prove that
\begin{equation}\label{sum}
\int_0^{\infty}\sum_{n\geq 2}n(\log n)^{2+\varepsilon}\frac{(\lambda^*x)^n}{n!}e^{-\lambda^*x}\Pi({\rm d}x)<\infty.
\end{equation}
To this end, define the following function $f(x)=(\log (1+x))^{2+\varepsilon}$, then it is easy to see that
\begin{equation}
f''(x)=-(2+\varepsilon)\frac{(\log(1+x))^{\varepsilon}}{(1+x)^2}(\log (1+x)-(1+\varepsilon)).\notag
\end{equation}
Then we can find $N_0\in\mathbb{N}$ such that $\log(1+N_0)>(1+\varepsilon)$ and subsequently that $f''(x)<0$ for $x\geq N_0$. This implies  that $f$ is concave in $(N_0,\infty)$. Hence, using Jensen's inequality
\begin{eqnarray}
\lefteqn{\sum_{n\geq N_0+1}n(\log n)^{2+\varepsilon}\frac{(\lambda^*x)^n}{n!}e^{-\lambda^*x}}&&\\
&=&\lambda^*x\sum_{n\geq N_0}(\log (n+1))^{2+\varepsilon}\frac{(\lambda^*x)^n}{n!}e^{-\lambda^*x}\notag\\
&\leq&(\lambda^* x)\left(\sum_{n\geq N_0}e^{-\lambda^* x}\frac{(\lambda^*x)^n}{n!}\right)\left(\log\left(\frac{\sum_{n\geq N_0}n\frac{(\lambda^*x)^n}{n!}e^{-\lambda^*x}}{\sum_{n\geq N_0}\frac{(\lambda^*x)^n}{n!}e^{-\lambda^*x}}+1\right)\right)^{2+\varepsilon}\notag\\
&\leq& (\lambda^* x)\left(\log\left(\frac{\sum_{n\geq
N_0}n\frac{(\lambda^*x)^n}{n!}}{\sum_{n\geq
N_0}\frac{(\lambda^*x)^n}{n!}}+1\right)\right)^{2+\varepsilon}.\label{c2}
\end{eqnarray}
On the other hand we have that
\begin{equation}
\lim_{x\to\infty}\frac{\log\left(\frac{\sum_{n\geq
N_0}n\frac{(\lambda^*x)^n}{n!}}{\sum_{n\geq
N_0}\frac{(\lambda^*x)^n}{n!}}+1\right)}{\log x}=1.\notag
\end{equation}
So we can find $K>0$ such that if $x>K$
\begin{equation}
\log\left(\frac{\sum_{n\geq
N_0}n\frac{(\lambda^*x)^n}{n!}}{\sum_{n\geq
N_0}\frac{(\lambda^*x)^n}{n!}}+1\right)<2\log x.\notag
\end{equation}
 Using (\ref{c2}), this implies that
\begin{equation}
\int_{[K,\infty)}\sum_{n\geq N_0+1}n(\log n)^{2+\varepsilon}\frac{(\lambda^*x)^n}{n!}e^{-\lambda^*x}\Pi({\rm d}x)<2^{2+\varepsilon}\lambda^*\int_{[K,\infty)}x(\log x)^{2+\varepsilon} \Pi({\rm d}x)<\infty.\notag
\end{equation}
On the other hand, by choosing $N_0$ large enough, we also have that for all $n\geq N_0+1$, $(\log n)^{2+\varepsilon}< C(n-1)$ for some $C>0$. Hence
\begin{eqnarray*}
\lefteqn{\int_{(0,K)}\sum_{n\geq N_0+1}n(\log n)^{2+\varepsilon}\frac{(\lambda^*x)^n}{n!}e^{-\lambda^*x}\Pi({\rm d}x)}&&\\
&\leq& (\lambda^*K)^2\sum_{n\geq N_0+1 }\int_{(0,\infty)}\frac{(\lambda^*x)^{n-2}}{(n-2)!}e^{-\lambda^*x} \Pi({\rm d}x)\\
&\leq& (\lambda^*K)^2\sum_{n\geq 2 }p_n<\infty
\end{eqnarray*}

For the first terms of (\ref{sum}) we have
\begin{equation*}
\int_{(0,\infty)}\sum_{n=2}^{N_0}n(\log
n)^{2+\varepsilon}\frac{(\lambda^*x)^n}{n!}e^{-\lambda^*x}\Pi(dx)=\sum_{n=2}^{N_0}n(\log
n)^{2+\varepsilon}\frac{(\lambda^*)^n}{n!}\int_{(0,\infty)}x^ne^{-\lambda^*x}\Pi(dx)<\infty,
\end{equation*}
which follows from the fact that each term of the sum is finite.
This completes the proof.
\ter\end{proof}

\begin{remark}\rm
It is not difficult to show that the converse of the statement in Lemma \ref{integraltest} is also true, however we leave it as an exercise for the reader.
\end{remark}

Let us now turn to the proof of Theorem \ref{III}. We approach the proof here on in two steps. The first step is to show that the process $||Z^Y_D||$ under ${\rm Q}_{\delta_x}$, for which distributional properties are known thanks to (\ref{Mal-asym}), has the same branching generator  as the continuous-time Galton-Watson process $Z':=\{Z'_z: z\geq 0\}$, where the latter is the backbone embedded in the continuous-state branching process $||Y_D||$. Thanks to the backbone decomposition of $||Y_D||$, say $(Z',\Lambda')$,  and the easily seen fact that $\psi_D(\lambda^*) = 0$, we have that the law of $Z'_{z}$ given $\Lambda'_{z}$ is that of a Poisson random variable with parameter $\lambda^*\Lambda'_z$. This Poissonisation result will allow us to feed the known distributional asymptotic for $Z'_z$ (equiv. $||Z^Y_{D_{-z}}||$) into the required result for $\Lambda'_z$ (equiv. $||Y_{D_{-z}}||$).

\bigskip

{\it\underline{Step 1:}} We start by recalling from  Maillard
\cite{Mal},  Section 3, that $||Z^Y_{D}||$ has branching generator
given by $\Theta'(\Theta^{-1}(s)), \, s\in[0,1]$ where $\Theta$ is
the unique monotone solution to  the wave equation
\begin{equation}
\frac{1}{2}\Theta''(x) - \sqrt{2\alpha}\Theta'(x) +F(\Theta(x)) = 0 \text{ on }\mathbb{R}\text{ with }\Theta(-\infty) = 1\text{ and }\Theta(+\infty)=0,
\label{THETA}
\end{equation}
and $F$ is the branching generator of the backbone $Z^Y$ given in (\ref{F}). Again appealing to (\ref{F}) but for the backbone decomposition $(Z',\Lambda')$ of $||Y_{D}||$ and the fact that $\psi_D(\lambda^*)=0$, we know that $Z'$ has branching generator given by
\[
F_D(s) = \frac{1}{\lambda^*}\psi_D(\lambda^*(1-s)), \text{ for }s\in[0,1].
\]
Our objective is thus to show that $F_D(s) =\Theta'(\Theta^{-1}(s))$ for all $s\in[0,1]$.

To this end, recall that $\psi_D(\lambda) = \Psi'(\Psi^{-1}(\lambda))$  for $\lambda\in[0,\lambda^*]$ where $\Psi$ solves (\ref{PSI}). It is a straightforward exercise to check that  $\Theta(x) = 1-\Psi(-x)/\lambda^*$. Indeed  ${\Theta}(+\infty) = 0$ and ${\Theta}(-\infty)=1$ and  $\Theta$ solves (\ref{THETA}) on account of the fact that $\Psi$ solves (\ref{PSI}). Moreover, one readily confirms that
\[
\frac{1}{\lambda^*}\Psi'(\Psi^{-1}(\lambda^*(1-s))) = \Theta'(\Theta^{-1}(s))\text{ for }s\in[0,1],
\]
This implies in turn that the required equality, $F_D(s) =\Theta'(\Theta^{-1}(s))$, holds and in particular that $||Z^Y_{D}||$ and $Z'$ have the same branching generator.

\bigskip

{\it\underline{Step 2:}} Recall that $(Z',\Lambda')$ is the  backbone decomposition of $\{||Y_{D_{-z}}||:z\geq 0\}$ and  denote the law of former by $\mathcal{Q}_{x}$  when the latter has law $\mathbb{Q}_{\delta_x}$.  Appealing to spatial homogeneity, we may henceforth proceed without loss of generality by assuming that $x =0$.

It follows from the conclusion of Step 1 and the Poissonisation property of the backbone decomposition that for $z\geq 0$ and $s\in[0,1]$,
\begin{equation}
{\rm Q}_{\delta_0}(s^{||Z^Y_{D_{-z}}||}) = \mathcal{Q}_{0}(s^{Z'_z}) =\mathcal{Q}_{0}\left(\mathcal{Q}_{0}(s^{Z'_z}|\Lambda'_z)\right) = \mathcal{Q}_{0}(e^{-\lambda^*\Lambda'_z(1-s)}) = \mathbb{Q}_{\delta_0}(e^{-\lambda^*||Y_{D_{-z}}||(1-s)}).
\label{lotsofQs}
\end{equation}
Now using the fact that, under ${\rm Q}_{\delta_0}$, $||Z^Y_{D_{0}}|| = ||Z^Y_0||$ is a Poisson random variable with intensity $\lambda^*$ we have
\begin{equation}\label{tlp2}
{\rm Q}_{\delta_0}\left[s^{||Z^Y_{D_{-z}}||}\right]=\sum_{k=0}^{\infty}e^{-\lambda^*}\frac{(\lambda^*)^k}{k!}F_z(s)^k=\exp\left\{-\lambda^*\left(1-F_z(s)\right)\right\},
\end{equation}
where $F_z(s)$ was defined in (\ref{Fz-def}). 
If we set
\[
w_z(s)  = \mathbb{Q}_{\delta_0}(e^{-s||Y_{D_{-z}}||})
\]
then (\ref{lotsofQs}) and (\ref{tlp2}) tell us that
\begin{equation*}\label{re}
w_z(\lambda^*s)=\exp\{-\lambda^*(1-F_z(1-s))\}.
\end{equation*}

Taking second derivatives on both sides of the last equality gives us
\begin{equation}\label{sd}
(\lambda^*)^2w''_z(\lambda^*s)=(\lambda^*F''_z(1-s)+(\lambda^*F'_z(1-s))^2)\exp\{-\lambda^*(1-F_z(1-s))\}.
\end{equation}
Recalling (\ref{Fz}) and noting that
\begin{equation}
F_z'(1-s)\sim e^{z\sqrt{2\alpha}},\qquad\text{as $s\downarrow0$},\notag
\end{equation}
which holds on account of the fact that $||Z^Y_D||$ is a continuous-time Galton-Watson process with growth rate $\sqrt{2\alpha}$,
we have from (\ref{sd})  that
\begin{equation}\label{as1}
w_z''(\lambda^*s)\sim\frac{\sqrt{2\alpha}ze^{z\sqrt{2\alpha}}}{\lambda^*s(\log(1/s))^2},\qquad\text{as $s\downarrow0$}.
\end{equation}
Taking $u=\lambda^*s$ and using (\ref{as1}) we obtain
\begin{equation}
w_z''(u)\sim\frac{\sqrt{2\alpha}ze^{z\sqrt{2\alpha}}}{u(\log\lambda^*+\log(1/u))^2}\sim\frac{\sqrt{2\alpha}ze^{z\sqrt{2\alpha}}}{u(\log(1/u))^2} ,\qquad\text{as $u\downarrow0$}.\label{as2}
\end{equation}
Denote by $U_z({\rm d}y)$ the measure in $(0,\infty)$ defined by the relation
\begin{equation}
w_z(s)=\int_{[0,\infty)}e^{-sy}U_z({\rm d}y).\notag
\end{equation}
In other words $U_z({\rm d}y)=\mathbb{Q}_{\delta_0}(||Y_{D_{-z}}||\in {\rm dy})$ for $y\geq 0$. And let us take $\tilde{U}_z({\rm d}y)=y^2U_z({\rm d}y)$ on $[0,\infty)$, then it is easy to see that
\begin{equation}
w_z''(s)=\int_{[0,\infty)}e^{-sy}\tilde{U}_z({\rm d}y).\notag
\end{equation}
Then using Theorem 2 XIII.5 in \cite{fe} we have using (\ref{as2}), that
\begin{equation}
\tilde{U}_z(t)\sim\frac{t\sqrt{2\alpha}ze^{z\sqrt{2\alpha}}}{(\log t)^2}\qquad\text{as $t\to\infty$}.\notag
\end{equation}
Using integration by parts it is easy to see that
\begin{equation}
U_z(t,\infty)=\int_{(t,\infty)}y^{-2}\tilde{U}_z({\rm d}y)\sim \sqrt{2\alpha}ze^{z\sqrt{2\alpha}}\left(2\int_t^{\infty}\frac{1}{y^2(\log y)^2}{\rm d}y-\frac{1}{t(\log t)^2}\right)\qquad\text{as $t\to\infty$}.\notag
\end{equation}
But by Theorem 1 VIII.9 in \cite{fe} the  integral on the right hand side above is equivalent to $1/t(\log t)^2$.  This implies that
\begin{equation}
\mathbb{Q}_{\delta_0}(||Y_{D_{-z}}||>t)=U_z(t,\infty)\sim\frac{\sqrt{2\alpha}ze^{z\sqrt{2\alpha}}}{t(\log t)^2}\qquad\text{as $t\to\infty$},\notag
\end{equation}
which proves the result.
\ter

\begin{remark}\rm
Maillard \cite{Mal} gives further results in the case that
$\rho>\sqrt{2\alpha}$ for the asymptotic behaviour of
$\mathbb{Q}_{\delta_x}(||Y_{D_{-z}}||>t)$ as $t\uparrow\infty$.
Again using ideas of Poisson embedding through the backbone, analogous asymptotics can be transferred from the case
of branching Brownian motion to super-Brownian motion.
\end{remark}

\section*{Acknowledgements}
All three authors would like to thank and anonymous referee for
their comments and remarks on an earlier draft of this paper which
lead to its improvement. AEK acknowledges financial support from the
Santander Research Grant Fund, JLP acknowledges financial support
from CONACyT-MEXICO grant number 150645, AMS acknowledges financial
support from CONACyT-MEXICO grant number 129076.


\begin{thebibliography}{55}\itemsep=-4pt\small

\bibitem{AB} Addario-Berry, L. and Broutin, N. (2009): Total progeny in killed branching random walk. \texttt{http://arxiv.org/abs/0908.1083v1}

\bibitem{AHZ} A\"id\'ekon, E., Hu, Y. and Zindy, O. (2011): The precise tail behavior of the total progeny of a killed branching random walk.
\texttt{arxiv.org:1102.5536}

\bibitem{BKMS} Berestycki, J., Kyprianou, A.E. and Murillo-Salas, A. (2011): The prolific backbone decomposition for supercritical superdiffusions. {\it Stoch. Proc. Appl.} {\bf 121}, 1315--1331.
\bibitem{Dyn1991} Dynkin, E.B.  (1991): A probabilistic approach to one class of non-linear differential equations. {\it Probab. Th. Rel. Fields} {\bf 89}, 89--115.


\bibitem{D}  Dynkin, E.B.  (1991):         Branching particle systems and superprocesses, {\em  Ann. Probab.} {\bf 19}, 1157--1194.


\bibitem{Dyn1993} Dynkin, E.B. (1993): Superprocesses and Partial Differential Equations. {\it Ann. Probab.} {\bf 21}, 1185--1262.
\bibitem{Dyn2001} Dynkin, E.B. (2001): Branching exit Markov systems and superprocess. {\it Ann. Probab.} {\bf 29}, 1833--1858.
\bibitem{Dyn2002} Dynkin, E.B. (2002): {\it Diffusions, Superdiffusions and Partial Differential Equations.} AMS, Providencem R.I.
\bibitem{Dynkin-Kuznetsov} Dynkin, E.B. and Kuznetsov, S.E. (2004): $\mathbb{N}$-measures for branching Markov exit systems and their applications to differential equations. {\it Prob. Theory Related Fields} {\bf 130}, 135--150.

\bibitem{El} El Karoui, N. and  Roelly, S. (1991)
Propri\'et\'es de martingales, explosion et repr\'esentation de L\'evy-Khintchine d'une classe de processus de branchement \`a valeurs mesures. {\it Stoch. Proc.. Appl.} {\bf 38} (1991), 239--266.

\bibitem{EP}  Engl\"{a}nder, J. and Pinsky, R.G. (1999): On the construction
and support properties of measure-valued diffusions on $D\subseteq
R^{d}$ with spatially dependent branching. \textit{Ann. Probab.}
{\bf 27} 684--730.

\bibitem{EO}  Evans, S. N. and O'Connell, N. (1994): Weighted Occupation
Time for Branching Particle Systems and a Representation for the
Supercritical Superprocess. \textit{Canad. Math. Bull.}, {\bf 37}
187-196.
\bibitem{fe} Feller, W. (1971): {\it  An introduction to probability theory and its applications. Vol. II. 2nd. ed.} Wiley Series in probability and Mathematical Statistics. New York etc.: John Wiley and Sons, Inc. XXIV, 669 p.

              \bibitem{Fitz}   Fitzsimmons, P.J.  (1988): Construction and regularity of measure-valued Markov branching processes. {\it Israeli J. Math.} {\bf 64}, 337--361.


\bibitem{HHK} Harris, J.W., Harris, S.C. and Kyprianou, A.E.  (2006): Further probabilistic analysis of the Fisher-Kolmogorov-Pretrovskii-Piscounov: one sided travelling-waves. {\it  Annales de l'Instut Henri Poincar\'e}, {\bf 42}, 125--145.


\bibitem{Kam}  Kametaka, Y. (1976): On the nonlinear diffusion equation of Kolmogorov-Petrovskii-Piskunov type. {\it Osaka J. Math.} {\bf 13},  11--66.

\bibitem{K}  Kyprianou, A.E. (2006): {\it Introductory lectures on fluctuations of L\'evy processes with applications.} Springer.

\bibitem{KLMSR} Kyprianou, A., Liu, R.-L., Murillo-Salas, A. and Ren, Y.-X. (2011): Supercritical super-Brownian motion with a general branching mechanism and travelling waves. {\it To appear in  Annales de l'Instut Henri Poincar\'e}.

\bibitem{LG} Le Gall, J-F. (1999): {\it Spatial branching processes, random snakes and partial differential equations.} Lectures in Mathematics ETH Z\"urich. Birkh\"auser Verlag, Basel.


\bibitem{Mal} Maillard, P.  (2011): The number of absorbed individuals in branching Brownian motion with a barrier. \texttt{arXiv:1004.1426}



\bibitem{neveu}  Neveu, J.  (1988): Multiplicative martingales for spatial
branching processes. In Seminar on Stochastic Processes 1987, eds E. \c{C}%
inlar, K.L. Chung, R.K. Getoor. Progress in Probability and Statistics, 15,
223--241. Birkha\"{u}ser, Boston.

\bibitem{Pin} Pinsky, R.G.  (1995): K-P-P-type asymptotics for nonlinear diffusion in a large ball with infinite boundary data and on $\bold R^d$ with infinite initial data outside a large ball. {\it Comm. Partial Differential Equations} {\bf 20},  1369--1393.

\bibitem{SV1} Salisbury, T. and Verzani, J. (1999): On the conditioned exit measures of super Brownian motion.
{\it Prob. Theory Relat. Fields} {\bf 115}, 237-285

\bibitem{SV2} Salisbury, T. and Verzani, J.  (2000): Non-degenerate conditionings of the exit measure of super Brownian motion.
{\it Stoch. Proc.  Appl.} {\bf 87}, 25-52.

\bibitem{Sheu} Sheu, Y.C. (1997): Lifetime and compactness of the range of a $\psi$-super-Brownian motion with general branching mechanism. {Stoch. Proc. Appl.} {\bf 70}, 129--141.

\bibitem{U78}  Uchiyama, K.  (1978): The behavior of solutions of some fnon-linear diffusion equations for large time. {\em J. Math. Kyoto Univ.} {\bf 18}, 453-508.

\bibitem{watanabe1968} Watanabe, K.  (1968): A limit theorem of branching processes and continuous-state branching processes. {\em J. Math. Kyoto Univ.} {\bf 8}, 141--167.
\end{thebibliography}
\end{document}